\date{}
\renewcommand{\uppercasenonmath}[1]{}
\theoremstyle{plain}
\newtheorem{theorem}{Theorem}[section]
\newtheorem{lemma}[theorem]{Lemma}
\newtheorem{example}[theorem]{Example}
\newtheorem*{open question}{Open Question}
\newtheorem{question}{Question}
\theoremstyle{definition}
\theoremstyle{remark}
\newtheorem{remark}[theorem]{Remark}
\def\GV{{\rm GV}}
\def\GV{{\rm GV}}
\begin{document}
\begin{center}
{\large  \bf On generalized fraction and power series properties of $\mathcal{S}$-Noetherian rings}

\vspace{0.5cm}   Xiaolei Zhang$^{a}$

{\footnotesize
School of Mathematics and Statistics, Shandong University of Technology,
Zibo 255049, China\\

E-mail: zxlrghj@163.com\\}
\end{center}

\bigskip
\centerline { \bf  Abstract}
\bigskip
\leftskip10truemm \rightskip10truemm \noindent

In this note, we study the generalized fraction properties and power series properties of $\mathcal{S}$-Noetherian rings. Actually, we answer two questions proposed in [A. Dabbabi, A. Benhissi,  Generalization of the $S$-Noetherian concept, {\it  Arch. Math.} (Brno) \textbf{59}(4) (2023) 307-314.] 
\vbox to 0.3cm{}\\
{\it Key Words:} $\mathcal{S}$-Noetherian ring, generalized fraction ring, formal power series ring.\\
{\it 2020 Mathematics Subject Classification:}  13E05, 13A15.

\leftskip0truemm \rightskip0truemm
\bigskip

\section{Introduction}
Throughout this note, all rings are always   commutative rings with identity. Let $A$ be a ring. We always denote by $A[[x]]$ the formal power series ring with coefficients in $A$,  $S$  a multiplicative subset of $A$, and $\mathcal{S}$ a multiplicative system of ideals of  $A$. For a subset $U$ of  an $A$-module $M$, we denote by $\langle U\rangle$ or $(U)A$ the $A$-submodule of $M$ generated by $U$.

In 2002, Anderson and Dumitrescu \cite{ad02} introduced the so-called $S$-Noetherian rings. An ideal $I$ of a ring $A$ is said to be  \emph{$S$-finite} if there is a finitely generated subideal $K$ of $I$  such that $sI\subseteq K$ for some $s\in S$. And a ring $A$ is called an \emph{$S$-Noetherian ring} if every ideal of $A$ is $S$-finite. The $S$-Noetherian version of Cohen's Theorem, Eakin-Nagata Theorem and Hilbert Basis Theorem of both polynomial  and power series  forms are given in \cite{ad02}. Some more works on $S$-Noetherian rings can be found in \cite{hh15,l15,lO14,lO15,l07}.

Recently, Dabbabi and Benhissi \cite{DB23} generalized the notion of $S$-Noetherian rings in terms of multiplicative systems of ideals of a given ring. Let $A$ be a ring and $\mathcal{S}$ be a multiplicative system of ideals of $A$. An ideal $I$ of $A$ is said to be  \emph{$\mathcal{S}$-finite} if there is a finitely generated subideal $F$ of $I$  such that $HI\subseteq F$ for some $H\in \mathcal{S}$.  A ring $A$ is called an \emph{$\mathcal{S}$-Noetherian ring} if every ideal of $A$ is $\mathcal{S}$-finite. Certainly if $\mathcal{S}$ is composed of principal ideals generated by elements in $S$, then $\mathcal{S}$-Noetherian rings are exactly  $S$-Noetherian rings. Moreover, $\mathcal{S}$-Noetherian version of Cohen's Theorem, Eakin-Nagata Theorem and Hilbert Basis Theorem of  polynomial form are also investigated in \cite{DB23}.

It is known in \cite{ad02} that if $A$ is an $S$-Noetherian ring, then the fraction ring $A_S$ is a Noetherian ring, and the power series ring $A[[x]]$ is also an $S$-Noetherian ring under the anti-Archimedean condition. However, they are left as two open questions for $\mathcal{S}$-Noetherian rings by  Dabbabi and Benhissi (see 
\cite[\emph{Questions}]{DB23}). The main motivation of this note is to investigate these two questions. Actually, we show that suppose $A$ is an $\mathcal{S}$-Noetherian domain. The generalized fraction ring $A_\mathcal{S}$ need not be Noetherian (see Example \ref{ce}). We also obtain that if $A$ is $\mathcal{S}$-Noetherian, then $A[[x]]$ is  $\mathcal{S}$-Noetherian  under some mild assumption (see Theorem \ref{pow}).

\section{main results}

Let $A$ be an integral domain with its quotient field $K$ and $\mathcal{S}$ a multiplicative system of ideals of $A$. Denote by $$A_\mathcal{S}=\{x\in K\mid xH\subseteq A\  \mbox{for some}\ H\in \mathcal{S}\},$$
and call it the generalized fraction ring of $A$ with respect to $\mathcal{S}$. If  $\mathcal{S}=\{sA\mid s\in S\}$ for some  multiplicative subset $S$ of $A$, then $A_\mathcal{S}=A_S$ the localization of $A$ at $S$. It follows by \cite[Proposition 2(f)]{ad02} that if  $A$ is an $S$-Noetherian ring, then  $A_S$ is a Noetherian ring. For general case, the authors in \cite{DB23} proposed the following question:

\begin{question}\label{1} Let $A$ be an integral domain and $\mathcal{S}$ a  multiplicative system of ideals of $A$ such that $A$ is $\mathcal{S}$-Noetherian. Does it follow that  $A_\mathcal{S}$ is Noetherian?
\end{question}

To give a counter-example to this question, we recall some basic notions of $w$-operations on integral domains (see \cite{wm97,fk16} for more details).

Let $A$ be an integral domain with its quotient field $K$. Let $J$ be a finitely generated ideal of $A$ and set $J^{-1}:=\{x\in K\mid Jx\subseteq A\}$. If $J^{-1}=A$. Then $J$ is said to be a $\GV$-ideal of $A$, and denoted it by  $J\in \GV(A)$. Certainly, $\GV(A)$ is a multiplicative system of ideals of $A$. Let $M$ be a torsion-free $A$-module. Denote by $$M_w=\{x\in M\otimes_AK\mid Jx\subseteq M\  \mbox{for some}\ J\in \GV(A)\}.$$
Trivially, $M\subseteq M_w$ and $(M_w)_w=M_w$. Moreover, if $M=M_w$, then $M$ is called a $w$-module. Trivially, the basic ring $A$ itself is a $w$-module. So if we take $\mathcal{S}=\GV(A)$, then $A_\mathcal{S}=A_w=A$. Now, we are ready to give a counter-example to Question \ref{1}.

\begin{example}\label{ce} Let $A$ be a non-Noetherian domain such that every $w$-ideal of $A$ is finitely generated $($e.g. $a$ is a non-Noetherian unique factorization domain, see \cite[Theorem 7.9.5]{fk16}$).$ We claim that $A$ is a $\GV(A)$-Noetherian ring in the sense of \cite{DB23}. Indeed, let $I$ be an ideal of $A$. Then $I_w$ is a finitely generated ideal of $A$ by assumption. Assume $I_w=\langle a_1,\dots,a_n\rangle$. Then there exists $J_i\in\GV(A)$ such that $J_ia_i\in I$ for each $i=1,\dots,n$. Setting  $J=J_1\dots J_n$, we have $J\in\GV(A)$ and $JI_w\subseteq I.$ Note that $JI\subseteq JI_w$ and $JI_w$ is finitely generated. So $I$ is $\GV(A)$-finite in the sense of \cite{DB23}. Consequently, $A$ is a $\GV(A)$-Noetherian ring. However, the generalized fraction ring $A_\mathcal{S}=A_w=A$ is not a Noetherian domain.
\end{example}

Let $A$ be a ring and $S$ a  multiplicative subset of $A$ such that for each $s\in S$, $\bigcap\limits_{n=1}^\infty s^nA$ contains some element in $S$ (i.e., $S$ is anti-Archimedean). Then it was proved in \cite[Proposition 10]{ad02} that if $A$ is an $S$-Noetherian ring, then  $A[[x]]$ is also  $S$-Noetherian. For the general case, the authors in \cite{DB23} proposed the following question: 
\begin{question} Let $A$ be a ring and $\mathcal{S}$ a  multiplicative system of ideals of $A$ such that for each $I\in \mathcal{S}$, $\bigcap\limits_{n=1}^\infty I^n$ contains some ideal of $\mathcal{S}$. Suppose $A$ is $\mathcal{S}$-Noetherian. Do it follow that $A[[x]]$ is also $\mathcal{S}$-Noetherian?
\end{question}
We  give a positive answer to this question under a mild assumption.
\begin{lemma}\label{fangmi}
	Let $H=\langle a_1, a_2,\cdots,a_n\rangle$ be a finitely generated ideal of $A$. Suppose $m$ is  a positive integer and $I=\langle a^m_1, a^m_2,\cdots,a^m_n\rangle$. Then $H^{mn}\subseteq I$.
\end{lemma}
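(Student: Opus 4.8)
The plan is to reduce the containment to a statement about monomials and then settle it by a pigeonhole count on exponents. First I would recall that $H^{mn}$ is the ideal generated by all products of $mn$ elements of $H$, and since $H$ is generated by $a_1,\dots,a_n$, the ideal $H^{mn}$ is in fact generated by the monomials
\[
a_1^{e_1}a_2^{e_2}\cdots a_n^{e_n}, \qquad e_1+e_2+\cdots+e_n=mn,\ e_i\ge 0.
\]
Thus it suffices to show that every such monomial lies in $I=\langle a_1^m,\dots,a_n^m\rangle$, because an ideal contained in $I$ on a generating set is contained in $I$.

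The key observation is a pigeonhole argument on the exponents. Suppose, for contradiction, that $e_i\le m-1$ for every $i$. Then
\[
mn=e_1+\cdots+e_n\le n(m-1)=mn-n<mn,
\]
which is absurd. Hence for every admissible exponent vector there is at least one index $i$ with $e_i\ge m$.

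Fixing such an $i$, I would write the monomial as
\[
a_1^{e_1}\cdots a_n^{e_n}=a_i^{m}\cdot\Bigl(a_i^{\,e_i-m}\prod_{j\ne i}a_j^{e_j}\Bigr),
\]
which exhibits it as an $A$-multiple of the generator $a_i^m$ of $I$, so the monomial lies in $I$. Since every generating monomial of $H^{mn}$ lies in $I$, we conclude $H^{mn}\subseteq I$, as desired.

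I do not anticipate any genuine obstacle here: the only point requiring a little care is the bookkeeping that lets one pass from the defining product description of $H^{mn}$ to its monomial generators (using commutativity of $A$), after which the counting inequality $n(m-1)<mn$ does all the work. The exponent $mn$ is exactly the threshold that forces some exponent to reach $m$, so the bound is sharp and no refinement is needed.
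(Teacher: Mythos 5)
Your proof is correct and follows exactly the paper's argument: reduce $H^{mn}$ to its monomial generators $a_1^{e_1}\cdots a_n^{e_n}$ with $\sum e_i = mn$, then use the pigeonhole principle to find some $e_i \geq m$, placing each monomial in $I$. You merely spell out the counting inequality $n(m-1) < mn$ and the explicit factorization that the paper leaves implicit.
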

\begin{proof}
	Note that $H^{mn}$ is generated by $\{\prod\limits_{i=1}^na_i^{k_i}\mid  \sum\limits_{i=1}^nk_i=mn\}$. By the pigeonhole principle, there exits some $k_i$ such that $k_i\geq m$. So each $\prod\limits_{i=1}^na_i^{k_i}\in I$, and thus $H^{mn}\subseteq I$.
\end{proof}
\begin{theorem}\label{pow} Let $A$ be a ring and $\mathcal{S}$ be a  multiplicative system of finitely generated ideals of $A$ such that for each $I\in \mathcal{S}$, $\bigcap\limits_{n=1}^\infty I^n$ contains some ideal of $\mathcal{S}$. Suppose $A$ is an $\mathcal{S}$-Noetherian ring. Then $A[[x]]$ is also an $\mathcal{S}$-Noetherian ring.
\end{theorem}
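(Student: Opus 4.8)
The plan is to mimic the classical power-series Hilbert basis argument, but to carry the ideals of $\mathcal{S}$ through the reduction by means of Lemma \ref{fangmi}. First I would fix how $\mathcal{S}$ is to be regarded as a multiplicative system of ideals of $A[[x]]$: for $H=\langle a_1,\dots,a_p\rangle\in\mathcal{S}$ one checks that $H[[x]]=HA[[x]]$ precisely because $H$ is finitely generated, so $\{HA[[x]]\mid H\in\mathcal{S}\}$ is a multiplicative system of finitely generated ideals of $A[[x]]$; this is the point at which the hypothesis that $\mathcal{S}$ consist of finitely generated ideals is used. Since $HA[[x]]$ is generated by $a_1,\dots,a_p$, to prove that an ideal $\mathcal{I}$ of $A[[x]]$ is $\mathcal{S}$-finite it suffices to produce one $H'\in\mathcal{S}$ and finitely many $G_1,\dots,G_M\in\mathcal{I}$ with $aG\in\langle G_1,\dots,G_M\rangle$ for every generator $a$ of $H'$ and every $G\in\mathcal{I}$.

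Next I would introduce the leading-coefficient ideals. For each $n\ge 0$ let $L_n\subseteq A$ consist of $0$ together with the coefficients of $x^n$ of the elements of $\mathcal{I}$ of order $n$; these form an ascending chain $L_0\subseteq L_1\subseteq\cdots$ (multiplication by $x$ gives $L_n\subseteq L_{n+1}$), and I set $L=\bigcup_n L_n$. Because $A$ is $\mathcal{S}$-Noetherian, $L$ is $\mathcal{S}$-finite, so $HL\subseteq\langle b_1,\dots,b_r\rangle\subseteq L$ for some $H=\langle a_1,\dots,a_p\rangle\in\mathcal{S}$; after enlarging an index $N$ I may assume each $b_j$ is the leading coefficient of some $f_j\in\mathcal{I}$ of order exactly $N$. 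Likewise each $L_n$ with $n<N$ is $\mathcal{S}$-finite, producing ideals $H_n\in\mathcal{S}$ and order-$n$ elements $g_{n,k}\in\mathcal{I}$ whose leading coefficients generate the relevant subideal; I then absorb $H_0,\dots,H_{N-1},H$ into a single $\widehat H\in\mathcal{S}$ by taking their product.

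The heart of the matter is an order-reduction procedure. Given $g\in\mathcal{I}$, I first use the $g_{n,k}$ to cancel the coefficients of $x^0,\dots,x^{N-1}$, at the cost of multiplying $g$ by a generator of $\widehat H$, reducing to $\mathrm{ord}(g)\ge N$. Then, since the coefficient of $x^{N+i}$ of any remainder lies in $L$ and $HL\subseteq\langle b_1,\dots,b_r\rangle$, multiplying by a generator of $H$ and subtracting a suitable combination of the $x^i f_j$ raises the order by one. Iterating $k$ times multiplies $g$ by a product of $k$ generators of $H$ — a generator of $H^{k}$ — and leaves a remainder of order $\ge N+k$, so that modulo such a high power of $x$ the element is an explicit $A[[x]]$-combination of $f_1,\dots,f_r$.

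I expect the limit $k\to\infty$ to be the main obstacle, which is exactly what the anti-Archimedean hypothesis is designed to overcome: the remainders tend to $0$ $x$-adically, but the multipliers accumulate ever higher powers of $H$, and a finitely generated ideal of $A[[x]]$ need not be $x$-adically closed, so one cannot simply take limits. This is where Lemma \ref{fangmi} enters. Choose $H'\in\mathcal{S}$ with $H'\subseteq\bigcap_{k}H^{k}$, as guaranteed by the hypothesis on $\mathcal{S}$, and fix a generator $a'$ of $H'$. For every $k$ the lemma gives $H^{kp}\subseteq\langle a_1^{k},\dots,a_p^{k}\rangle$, whence $a'\in\langle a_1^{k},\dots,a_p^{k}\rangle$; this lets me run the reduction one generator at a time, as powers $a_l^{k}$ of a single element, and then recombine. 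The divisibility of $a'$ by every $a_l^{k}$ is precisely what should make the coefficients of the $f_j$ stabilize coefficient-by-coefficient and assemble into genuine power series $W_1,\dots,W_r\in A[[x]]$ with $a'g=\sum_{j}W_j f_j$. Carrying this out for the finitely many generators of $\widehat H H'$ together with the $f_j$ and $g_{n,k}$ would yield $\widehat H H'\cdot\mathcal{I}\subseteq\langle f_j,g_{n,k}\rangle$, so $\mathcal{I}$ is $\mathcal{S}$-finite and $A[[x]]$ is $\mathcal{S}$-Noetherian. I anticipate the stabilization-of-coefficients step to demand the most care, especially when $A$ is not a domain, where the ``division'' of $a'$ by $a_l^{k}$ is not unique and the recombination must be organized so that the limiting $W_j$ are well defined.
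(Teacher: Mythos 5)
Your overall architecture is reasonable and mostly checks out: the identification $HA[[x]]=H[[x]]$ for finitely generated $H$, the criterion for $\mathcal{S}$-finiteness of an ideal $\mathcal{I}$ via generators of an extended ideal, the leading-coefficient ideals $L_n$ and their union $L$, and the order-reduction that works for \emph{every} generator of $H$ (since $HL\subseteq\langle b_1,\dots,b_r\rangle$ constrains all of $H$) are all fine. The genuine gap is the final stabilization step, which you flag yourself but do not close --- and it does not close as stated. After $k$ reduction steps with a fixed generator $a_l$ you obtain $a_l^kg=\sum_j W_j^{(k,l)}f_j+r_{k,l}$ with $\mathrm{ord}(r_{k,l})\geq N+k$, where the $x^i$-coefficient of $W_j^{(k,l)}$ has the form $a_l^{k-1-i}c_{j,i,l}$: each new step multiplies all previously produced coefficients by $a_l$, so even for fixed $l$ nothing is coefficientwise Cauchy. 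Writing $a'=\sum_l e_{k,l}a_l^k$ (a separate decomposition for each $k$, via Lemma \ref{fangmi}) and summing gives only
$a'g\equiv\sum_j\bigl(\sum_l e_{k,l}W_j^{(k,l)}\bigr)f_j \pmod{x^{N+k}A[[x]]}$,
and the decompositions $(e_{k,l})_l$ for different $k$ are completely unrelated, so the coefficients $\sum_l e_{k,l}a_l^{k-1-i}c_{j,i,l}$ need not be eventually constant in $k$; comparing two such congruences only shows that two combinations of the $f_j$ agree to high order, which, since the $f_j$ may satisfy relations, gives no control on the coefficients. What you actually prove is $a'g\in\bigcap_k\bigl(\langle f_1,\dots,f_r\rangle+x^{N+k}A[[x]]\bigr)$, i.e.\ membership in the $x$-adic closure --- exactly the insufficient conclusion you warn against. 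To make your limiting $W_j$ well defined one would need a coherent system $e_{k,l}=a_le_{k+1,l}$ for all $k,l$ (then $e_{k,l}a_l^{k-1-i}$ would be independent of $k$), but $a'\in\bigcap_k\langle a_1^k,\dots,a_p^k\rangle$ is an intersection of ideals and carries no such inverse-limit compatibility; the anti-Archimedean hypothesis does not supply it. So the decisive identity $a'g=\sum_jW_jf_j$ remains unproved.

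The paper avoids this obstacle by a structurally different first move: it invokes the $\mathcal{S}$-version of Cohen's theorem \cite[Corollary 2.12]{DB23} --- and this reduction, not the identity $H[[x]]=HA[[x]]$, is where the hypothesis that $\mathcal{S}$ consist of finitely generated ideals is genuinely used --- so that only prime ideals $P$ of $A[[x]]$ containing no member of $\mathcal{S}$ must be shown $\mathcal{S}$-finite. For such $P$ with $x\notin P$, division by each generator $h_l$ of $H$ is \emph{exact} at every step: $h_lf_j-\sum_i d_{ji,l}g_i=xf_{j+1}$, and primality together with $x\notin P$ puts $f_{j+1}$ back into $P$; no remainders accumulate, no leading-coefficient chain over all orders is needed, and the anti-Archimedean ideal $I$ together with Lemma \ref{fangmi} serves only to clear the denominators $h_l^{j+1}$ degree by degree in one closed-form expansion of $f$. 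That primality mechanism is precisely what your direct attack on arbitrary ideals lacks, and it is why your route, which would prove the statement without any reduction to primes, gets stuck at the closure problem. The repair is to restructure along the paper's lines: reduce to primes first, then run your division against the constant-term data.
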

\begin{proof} It follows by \cite[Corollary 2.12]{DB23} that we only need to show that every prime ideal $P$ of $A[[x]]$ that not containing any ideal in  $\mathcal{S}$ is $\mathcal{S}$-finite. Let $\pi: A[[x]]\rightarrow A$ be an $A$-homomorphism sending $x$ to $0$. Set $P'=\pi(P)$. Since $A$ is $\mathcal{S}$-Noetherian, $HP'\subseteq (g_1(0),\dots,g_k(0))A$ for some $H=\langle h_l\mid l=1,\dots,n\rangle\in \mathcal{S}$ and $g_1,\dots,g_k\in P$.  If $x\in P$, then $P=(P',x)A[[x]]$, and so $HP\subseteq(g_1,\dots,g_k,x)A[[x]]\subseteq P$ implying $P$ is $\mathcal{S}$-finite. Now, we assume that $x\not\in P$. Let $f\in P$. Then $f(0)\in P'$, and so for each $l$, we have $h_lf(0)=\sum_i d_{0i,l}g_i(0)$  for some $d_{0i,l}\in A$. And so $h_lf-d_{0i,l}g_i$ can be written as $xf_{1,l}$ with $f_{1,l}\in A[[x]]$ for each $l$.  Note that $xf_{1,l}\in P$, and so $f_{1,l}\in P$ for each $l$ since $x\not\in P$ and $P$ is a prime ideal. In the same way, for each $l$ we can write $h_lf_1=\sum_i d_{1i,l}g_i+xf_2$ with  $d_{1i,l}\in A$ and $f_2\in P$. Continuing these steps, for each $l$ and $j$ we have $h_lf_j=\sum_i d_{ji,l}g_i+xf_{j+1}$ with  $d_{ji,l}\in A$ and $f_{j+1}\in P$, where $f_0=f$. So for each $l$, we have $$f=\sum_i g_i(\sum_j(d_{ji,l}/h_l^{j+1})x^j).$$ 
Let  $I\subseteq \bigcap\limits_{j=1}^\infty H^j$  be an ideal in $\mathcal{S}$. It follows by Lemma \ref{fangmi} that $H^{nj}\subseteq \langle h_1^j,\dots, h_n^j\rangle$ for each $j$, and so $I\subseteq\bigcap\limits_{j=1}^\infty \langle h_1^j,\dots, h_n^j\rangle$. Let $a=\sum\limits_{l=1}^na_{(j+1)l}h_l^{j+1}\in I$ with $a_{(j+1)l}\in A$ for each $j$ and $l$. Then 
\begin{align*}
af	=& (\sum\limits_{l=1}^na_{(j+1)l}h_l^{j+1}) (\sum_i g_i(\sum_j(d_{ji,l}/h_l^{j+1})x^j)) \\
= &\sum\limits_{l=1}^n(\sum_i g_i(\sum_j(a_{(j+1)l}d_{ji,l})x^j))\\
\in& (g_1,\dots,g_k)A[[x]].
\end{align*}
So we have  $If\subseteq (g_1,\dots,g_k)A[[x]]$. Hence $IP\subseteq (g_1,\dots,g_k)A[[x]]\subseteq P$. Therefore, $A[[x]]$ is an $\mathcal{S}$-Noetherian ring.	
\end{proof}

\begin{remark}	It follows from \cite[Theorem 2.7]{DB23} that suppose $\mathcal{S}$ is a  multiplicative system of ideals of $A$ such that for each $I\in \mathcal{S}$, $\bigcap\limits_{n=1}^\infty I^n$ contains some ideal of $\mathcal{S}$. Then $A$ is an $\mathcal{S}$-Noetherian ring implies that the polynomial ring $A[x]$ is also an $\mathcal{S}$-Noetherian ring. However, we don't know that whether it is also true for power series rings in this general situation. Note that in the proof of Theorem \ref{pow}, we use Cohen's Theorem for $\mathcal{S}$-Noetherian rings which necessarily requires that  ``$\mathcal{S}$ is a  multiplicative system of finitely generated ideals'' (see \cite[Example 2.14]{DB23}).
\end{remark}

\end{document}